 \numberwithin{equation}{section}
 \def\z{\mathbb{z}} 
\newtheorem{thm}{Theorem}[section]
\newtheorem{lem}[thm]{Lemma}
\newtheorem{cor}[thm]{Corollary}
\newcommand{\be}{\begin{equation}}
\newcommand{\ee}{\end{equation}}
\newcommand{\ba}{\begin{array}}
\newcommand{\ea}{\end{array}}
\newcommand{\bg}{\begin{gathered}}
\newcommand{\eg}{\end{gathered}}
\newcommand{\bea}{\begin{eqnarray}}
\newcommand{\eea}{\end{eqnarray}}
\newcommand{\Sum}{\sum_{n=0}^\infty}
\title{On Certain Generalizations of  Rogers-Ramanujan Type Identities }
\author{Ahmad El-Guindy\thanks{Research supported by a QNRF grant from Qatar Foundation NPRP No. : 7-1360-1-254
} \and   Mourad E.H. Ismail\thanks{Research supported by a QNRF grant from Qatar Foundation NPRP No. : 7-1360-1-254
}  }
\begin{document}
\date{}
 \maketitle
\begin{abstract}
 We state and prove a number of unilateral and bilateral $q$-series identities and explore some of their consequences. Those include certain generalizations of the $q$-binomial sum which also generalize the $q$-Airy function introduced by Ramanujan, as well as certain identities with an interesting variable-parameter symmetry based on limiting cases of Heine's transformation of basic hypergeomteric functions.
 \end{abstract}

\bigskip
\noindent MSC (2010):  Primary 33D15   ; Secondary 33D70  

\noindent Keywords: Basic Hypergeometric Functions, Heine Transform, Ramanujan's $q$-Airy Function, Bilateral $q$-series

\bigskip


\section{Introduction}

The theory of $q$-series is well known for a number of fascinating identities with far reaching number theoretic consequences. A famous such example comes from the Rogers-Ramanujan identities

\bea
\Sum \frac{q^{n^2}}{(q;q)_n} = \frac{1}{(q, q^4;q^5)_\infty}\label{eqRR1} \\
\Sum \frac{q^{n^2+n}}{(q;q)_n} = \frac{1}{(q^2, q^3;q^5)_\infty},
 \label{eqRR}
 \eea
where we follow the standard notations for $q$-shifted factorials and basic hypergeometric series as in the books \cite{And:Ask:Roy}, \cite{Gas:Rah}, \cite{Ismbook}.  References for the Rogers-Ramanujan identities, their origins and many of 
 their applications are in \cite{And2}, \cite{And}, and \cite{And:Ask:Roy}.  In particular we recall  
 the partition theoretic interpretation of the first Rogers--Ramanujan identity as the partitions of an 
 integer $n$ into parts $\equiv 1 \textup{ or}\; 4 \pmod{5}$ are equinumerous with the partitions of $n$ 
 into parts where any two parts differ by at least 2.  The Roger-Ramanujan identities had many extensions and generalizations to different settings. One noteworthy generalization is to extend the 
 identities in \eqref{eqRR}  to evaluate the sum $\Sum q^{n^2+mn}/(q;q)_n$, $m =0, \pm 1, \pm2, \cdots$, see \cite{Gar:Ism:Sta}. One can view the Rogers-Ramanujan identities as evaluations of Ramanujan's function defined by 
\[
A_q(z)=\sum_{n=0}^\infty \frac{q^n}{(q;q)_n}(-z)^n.
\]
Namely, \eqref{eqRR1} evaluates $A_q(-1)$, \eqref{eqRR} evaluates $A_q(-q)$ .

Another remarkable identity,  which is even simpler, is the $q$-binomial identity \cite[(II.3)]{Gas:Rah}
\begin{equation}\label{qbinom}
\sum_{n=0}^\infty \frac{(a;q)_n}{(q;q)_n} z^n=\frac{(az;q)_\infty}{(z;q)_\infty}.
\end{equation}
In recent work by Ismail and zhang \cite{Ism:zha1}, the following function was  considered
\begin{equation}
A_{q}^{\left(\alpha\right)}\left(a;t\right)=\sum_{n=0}^{\infty}\frac{\left(a;q\right)_{n}q^{\alpha n^{2}}t^{n}}{\left(q;q\right)_{n}}.\label{Aqdef} 
\end{equation}
Clearly, $A_q^{(\alpha)}(a;t)$ specializes to $A_q(z)$ with $a=0, \alpha=1$ and $t=-z$. Furthermore, it specializes to the left hand side of \eqref{qbinom} when $\alpha=0$. Naturally, one wouldn't expect a simple closed form identity for such a general function, nonetheless we present in section 2 below some generalizations of identities on $A_q^{(\alpha)}$ relating different values of the parameter $\alpha$. We also address similar generalizations to a bilateral analogue of \eqref{Aqdef} related to the Ramanujan ${}_1\psi_1$ sums  \cite[(II.29)]{Gas:Rah}
\bea
\label{eq1psi1}
\sum_{-\infty}^\infty \frac{(a;q)_n}{(b;q)_n} z^n = 
\frac{(q, b/a, az, q/az;q)_\infty}{(b, q/a, z, b/az;q)_\infty}, \quad \left|\frac{b}{a}\right| < |z| < 1. 
\eea

In section 3, we consider the series
\[
F(a,c;z):=\sum_{k=0}^\infty \frac{(a;q)_k (-1)^k q^{k(k-1)/2}z^k}{(q,c;q)_k},
\]
which can be obtained as a limit of basic hypergeometric series. We then utilize hypergeometric transformations to obtain some infinite and terminating series identites.

\section{Unilateral and Bilateral analogues of Rogers-Ramanujan Identities}

We start by recalling  the following results which were proved in \cite{Ism:zha1}. 
\begin{lem}\label{lemma 1}[8, Lemma 4.1]
For nonnegative integer $j,k,\ell,m,n$ and $\rho=e^{2\pi i/3}$ we
have 
\begin{equation}
\sum_{k=0}^{n}\frac{\left(a;q\right)_{k}\left(a;q\right)_{n-k}\left(-1\right)^{k}}{\left(q;q\right)_{k}\left(q;q\right)_{n-k}}=\begin{cases}
0 & n=2m+1\\
\frac{\left(a^{2};q^{2}\right)_{m}}{\left(q^{2};q^{2}\right)_{m}} & n=2m
\end{cases},\label{eq:multiple sum 1}
\end{equation}
 and
\begin{equation}
\sum_{\begin{array}{c}
j+k+\ell=n\\
j,k,\ell\ge0
\end{array}}\frac{\left(a;q\right)_{j}\left(a;q\right)_{k}\left(a;q\right)_{\ell}}{\left(q;q\right)_{j}\left(q;q\right)_{k}\left(q;q\right)_{\ell}}\rho^{k+2\ell}=\begin{cases}
0 & 3\nmid n\\
\frac{\left(a^{3};q^{3}\right)_{m}}{\left(q^{3};q^{3}\right)_{m}} & n=3m
\end{cases}.\label{eq:multiple sum 2}
\end{equation}
 For $j,k,m,\ell,n\in\mathbb{z}$, we have 
\begin{equation}
\sum_{j+k=n}\frac{\left(a;q\right)_{j}\left(a;q\right)_{k}\left(-1\right)^{k}}{\left(b;q\right)_{j}\left(b;q\right)_{k}}=\begin{cases}
0 & n=2m+1\\
\frac{\left(q,b/a,-b,-q/a;q\right)_{\infty}}{\left(-q,-b/a,b,q/a;q\right)_{\infty}}\frac{\left(a^{2};q^{2}\right)_{m}}{\left(b^{2};q^{2}\right)_{m}} & n=2m
\end{cases}\label{eq:multiple sum 3}
\end{equation}
 and 
\begin{equation}
\sum_{j+k+\ell=n}^{\infty}\frac{\left(a;q\right)_{j}\left(a;q\right)_{k}\left(a;q\right)_{\ell}\rho^{k+2\ell}}{\left(b;q\right)_{j}\left(b;q\right)_{k}\left(b;q\right)_{\ell}}=0\label{eq:multiple sum 4}
\end{equation}
 for $3\nmid n$, 
\begin{eqnarray}
 &  & \sum_{j+k+\ell=3m}^{\infty}\frac{\left(a;q\right)_{j}\left(a;q\right)_{k}\left(a;q\right)_{\ell}\rho^{k+2\ell}}{\left(b;q\right)_{j}\left(b;q\right)_{k}\left(b;q\right)_{\ell}}\label{eq:multiple sum 5}\\
 & = & \frac{\left(q,b/a;q\right)_{\infty}^{3}}{\left(b,q/a;q\right)_{\infty}^{3}}\frac{\left(b^{3},q^{3}a^{-3};q^{3}\right)_{\infty}}{\left(q^{3},b^{3}a^{-3};q^{3}\right)}\frac{\left(a^{3};q^{3}\right)_{m}}{\left(b^{3};q^{3}\right)_{m}}.\nonumber 
\end{eqnarray}
 \end{lem}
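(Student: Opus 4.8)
The unifying principle here is coefficient extraction from products of the basic summation formulas, with the roots of unity entering as dilations of the summation variable $z$. The plan is to dispatch the two unilateral identities \eqref{eq:multiple sum 1} and \eqref{eq:multiple sum 2} first from the $q$-binomial theorem \eqref{qbinom}, and then to run the identical scheme for the bilateral identities \eqref{eq:multiple sum 3}--\eqref{eq:multiple sum 5} with \eqref{qbinom} replaced by the ${}_1\psi_1$ sum \eqref{eq1psi1}.

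For \eqref{eq:multiple sum 1}, observe that its left-hand side is exactly the coefficient of $z^n$ in the Cauchy product
\[
\left(\sum_{k\ge 0}\frac{(a;q)_k}{(q;q)_k}(-z)^k\right)\left(\sum_{\ell\ge 0}\frac{(a;q)_\ell}{(q;q)_\ell}z^\ell\right).
\]
Applying \eqref{qbinom} to each factor and using $(x;q)_\infty(-x;q)_\infty=(x^2;q^2)_\infty$ collapses this to $(a^2z^2;q^2)_\infty/(z^2;q^2)_\infty$, which by \eqref{qbinom} in base $q^2$ equals $\sum_{m\ge 0}\frac{(a^2;q^2)_m}{(q^2;q^2)_m}z^{2m}$; reading off the coefficient of $z^n$ gives the claim. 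Identity \eqref{eq:multiple sum 2} is the exact three-variable analogue, with factors in $z,\rho z,\rho^2 z$: the triple product telescopes via $(x;q)_\infty(\rho x;q)_\infty(\rho^2 x;q)_\infty=(x^3;q^3)_\infty$ (valid since $\prod_{d=0}^{2}(1-\rho^d xq^n)=1-x^3q^{3n}$) to $(a^3z^3;q^3)_\infty/(z^3;q^3)_\infty$, whose base-$q^3$ expansion yields \eqref{eq:multiple sum 2}.

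For the bilateral case, the left-hand side of \eqref{eq:multiple sum 3} is the coefficient of $z^n$ in the product of $\sum_{j}\frac{(a;q)_j}{(b;q)_j}z^j$ and $\sum_{k}\frac{(a;q)_k}{(b;q)_k}(-z)^k$, both evaluated by \eqref{eq1psi1}; the dilation $z\mapsto -z$ preserves the annulus $|b/a|<|z|<1$ since $|-1|=1$, so all manipulations are legitimate there. Using the squaring identity on each pairing $(az;q)_\infty(-az;q)_\infty$, $(q/az;q)_\infty(-q/az;q)_\infty$, $(z;q)_\infty(-z;q)_\infty$, and $(b/az;q)_\infty(-b/az;q)_\infty$ reduces the product to $(q,b/a;q)_\infty^2/(b,q/a;q)_\infty^2$ times a base-$q^2$ ratio that is itself the right-hand side of \eqref{eq1psi1} in $a^2,b^2,z^2$; matching against \eqref{eq1psi1} in base $q^2$ isolates $\sum_{m}\frac{(a^2;q^2)_m}{(b^2;q^2)_m}z^{2m}$ and leaves a constant prefactor, which I would simplify — again by repeated use of $(x^2;q^2)_\infty=(x;q)_\infty(-x;q)_\infty$ — to the stated $\frac{(q,b/a,-b,-q/a;q)_\infty}{(-q,-b/a,b,q/a;q)_\infty}$. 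Identities \eqref{eq:multiple sum 4} and \eqref{eq:multiple sum 5} then follow verbatim with the three dilations $z,\rho z,\rho^2 z$ and the cube-root factorization in base $q^3$.

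The routine but delicate part — and the one place a sign or a factor can easily slip — is the final simplification of the infinite-product prefactors in \eqref{eq:multiple sum 3} and especially \eqref{eq:multiple sum 5}: one must carry the $(q,b/a;q)_\infty$ and $(b,q/a;q)_\infty$ factors (cubed, in the last case) through the root-of-unity factorizations and cancel them against the base-$q^3$ products to recover $\frac{(q,b/a;q)_\infty^3}{(b,q/a;q)_\infty^3}\frac{(b^3,q^3a^{-3};q^3)_\infty}{(q^3,b^3a^{-3};q^3)_\infty}$. I expect this bookkeeping, rather than any conceptual difficulty, to be the main obstacle; the convergence and coefficient-extraction steps are justified throughout by working inside the common annulus $|b/a|<|z|<1$, on which the dilated series all converge.
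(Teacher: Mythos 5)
Your proposal is correct and is essentially the paper's own method: while the paper recalls this lemma from \cite{Ism:zha1} without reproving it, its Lemma 2.2 generalizes the statement to arbitrary primitive $r$-th roots of unity, and the proof given there is precisely your argument---multiply $r$ copies of the $q$-binomial theorem (respectively of Ramanujan's ${}_1\psi_1$ sum) at arguments dilated by powers of $\zeta_r$, collapse the infinite products using $\prod_{i=0}^{r-1}(1-\zeta_r^i x)=1-x^r$, and compare coefficients of $z^n$. Your $r=2,3$ details, including the prefactor simplification in \eqref{eq:multiple sum 3} via $(x;q)_\infty(-x;q)_\infty=(x^2;q^2)_\infty$, are consistent with the stated right-hand sides.
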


We start by proving an  extension of these results to general primitive roots of unity.   In order to make our notation compact; we define for $r\geq 2$ and $n\geq 0$ sets 
\bea
C_r(n):=\left\{(k_1,\dots,k_r): k_i\in \z \textrm{ and } \sum_{i=1}^r k_i=n\right\}.
\eea
Also, let $C_r^+(n)$ be the subset of $C_r(n)$ whose entries are all nonnegative. Furthermore, we let $\zeta_r$ denote the primitive $r$th root of unity $e^{2\pi i/r}$. 

\begin{lem}\label{new1}
The following identities hold for $r\geq 2$
\begin{equation}
\sum_{(k_1,\dots,k_r)\in C_r^+(n)}\frac{\left(a;q\right)_{k_1}\left(a;q\right)_{k_2}\cdots (a;q)_{k_r}}{\left(q;q\right)_{k_1}\left(q;q\right)_{k_2}\cdots (q;q)_{k_r}}\zeta_r^{\sum_{i=1}^r ik_i}=\begin{cases}
0 & r\nmid n,\\
\frac{\left(a^{r};q^{r}\right)_{m}}{\left(q^{r};q^{r}\right)_{m}} & n=rm,
\end{cases}\label{u1}
\end{equation}

\begin{equation}
\sum_{(k_1,\dots,k_r)\in C_r(n)}\frac{\left(a;q\right)_{k_1}\cdots (a;q)_{k_r}}{\left(b;q\right)_{k_1}\cdots (b;q)_{k_r}}\left(\zeta_r\right)^{\sum_{i=1}^r ik_i}=\begin{cases}
0 & r\nmid n,\\
 \frac{\left(q,b/a;q\right)_{\infty}^{r}}{\left(b,q/a;q\right)_{\infty}^{r}}\frac{\left(b^{r},q^{r}a^{-r};q^{r}\right)_{\infty}}{\left(q^{r},b^{r}a^{-r};q^{r}\right)}\frac{\left(a^{r};q^{r}\right)_{m}}{\left(b^{r};q^{r}\right)_{m}}
 & n=rm,
\end{cases}\label{b1}
\end{equation} 

 \end{lem}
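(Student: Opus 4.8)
The plan is to prove both identities by one generating-function device: recognize each left-hand side as the coefficient of $z^n$ in an $r$-fold product of the relevant known series evaluated at the scaled points $\zeta_r z,\zeta_r^2 z,\dots,\zeta_r^r z$, collapse that product to base $q^r$ using an elementary root-of-unity factorization, and then re-expand in base $q^r$ and match coefficients. The weight $\zeta_r^{\sum_i i k_i}$ in the sums is exactly what is generated by attaching the factor $\zeta_r^i$ to the $i$th series, so collecting the total power of $z$ reproduces the sum over $C_r^+(n)$ (resp.\ $C_r(n)$) automatically.

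For \eqref{u1}, set $f(z)=\sum_{k\ge0}\frac{(a;q)_k}{(q;q)_k}z^k=\frac{(az;q)_\infty}{(z;q)_\infty}$ by the $q$-binomial theorem \eqref{qbinom}. Multiplying the $r$ series $f(\zeta_r^i z)$, $i=1,\dots,r$, and collecting powers of $z$, the coefficient of $z^n$ is precisely the left side of \eqref{u1}. The step that makes the product collapse is the identity $\prod_{i=1}^r(1-\zeta_r^i w)=1-w^r$, valid because $\{\zeta_r^i:1\le i\le r\}$ runs over all $r$th roots of unity. Applying it factor by factor inside the infinite products gives $\prod_{i=1}^r(\zeta_r^i z;q)_\infty=(z^r;q^r)_\infty$ and $\prod_{i=1}^r(a\zeta_r^i z;q)_\infty=(a^rz^r;q^r)_\infty$, so $\prod_{i=1}^r f(\zeta_r^i z)=\frac{(a^rz^r;q^r)_\infty}{(z^r;q^r)_\infty}$. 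A second use of \eqref{qbinom}, now in base $q^r$ with variable $z^r$ and parameter $a^r$, rewrites this as $\sum_{m\ge0}\frac{(a^r;q^r)_m}{(q^r;q^r)_m}z^{rm}$; comparing coefficients of $z^n$ yields \eqref{u1}, with vanishing whenever $r\nmid n$.

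For \eqref{b1} the argument is identical in outline but uses the Ramanujan ${}_1\psi_1$ sum \eqref{eq1psi1} in place of \eqref{qbinom}: take $g(z)=\sum_{k\in\z}\frac{(a;q)_k}{(b;q)_k}z^k$ and form $\prod_{i=1}^r g(\zeta_r^i z)$, whose coefficient of $z^n$ is the left side of \eqref{b1}. Besides the two factorizations above one also needs the companion formula $\prod_{i=1}^r(c/(\zeta_r^i z);q)_\infty=(c^r/z^r;q^r)_\infty$, obtained the same way from $\prod_i(1-w\zeta_r^{-i})=1-w^r$, to handle the factors $(q/(a\zeta_r^i z);q)_\infty$ and $(b/(a\zeta_r^i z);q)_\infty$ in the closed form. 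After collapsing, the $z$-free factors contribute $\frac{(q,b/a;q)_\infty^r}{(b,q/a;q)_\infty^r}$, while the remaining fraction $\frac{(a^rz^r,q^r/(a^rz^r);q^r)_\infty}{(z^r,b^r/(a^rz^r);q^r)_\infty}$ equals, by a second application of \eqref{eq1psi1} in base $q^r$, the quantity $\frac{(b^r,q^r/a^r;q^r)_\infty}{(q^r,b^r/a^r;q^r)_\infty}\sum_{m\in\z}\frac{(a^r;q^r)_m}{(b^r;q^r)_m}z^{rm}$; reading off the coefficient of $z^{rm}$ produces exactly the stated right-hand side.

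The one point requiring care, and the main obstacle, is legitimizing the manipulations in the bilateral case. The series \eqref{eq1psi1} converges only in the annulus $|b/a|<|z|<1$, but since $|\zeta_r^i|=1$ all $r$ factors $g(\zeta_r^i z)$ converge on this \emph{same} annulus, where absolute convergence justifies expanding the product and rearranging into a single Laurent series; the companion series $\sum_{m\in\z}\frac{(a^r;q^r)_m}{(b^r;q^r)_m}z^{rm}$ likewise converges for $|b/a|<|z|<1$, so the two sides are analytic on a common domain and the final coefficient comparison rests on the uniqueness of Laurent coefficients. In the unilateral case convergence is a non-issue since the manipulations are at the level of convergent power series; everything else is the bookkeeping of collecting powers of $z$ together with the two root-of-unity product evaluations.
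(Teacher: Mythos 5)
Your proposal is correct and follows essentially the same route as the paper: both expand each factor via the $q$-binomial theorem (resp.\ the Ramanujan ${}_1\psi_1$ sum), collapse the $r$-fold product of infinite products over the $r$th roots of unity to base $q^r$, re-expand by the same theorem in base $q^r$, and compare coefficients of $z^n$. Your added remarks on the common annulus of convergence and uniqueness of Laurent coefficients only make explicit what the paper leaves implicit.
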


\begin{proof} We start by noting that, for  $|t|<1$ we have

\[
\frac{\left(at;q\right)_{\infty}}{\left(t;q\right)_{\infty}}\frac{\left(a\zeta_r t;q\right)_{\infty}}{\left(\zeta_r t;q\right)_{\infty}}\cdots\frac{\left(a\zeta_r^{r-1}t;q\right)_{\infty}}{\left(\zeta_r^{r-1}t;q\right)_{\infty}}=\frac{\left(a^{r}t^{r};q^{r}\right)}{\left(t^{r};q^{r}\right)}, \quad \left|t\right|<1. 
\]
Employing \eqref{qbinom} we see that
\bea\label{pr1}
\prod_{i=0}^{r-1}\left( \sum_{k_i=0}^\infty \frac{(a;q)_{k_i}}{(q;q)_{k_i}} (\zeta_r^i x)^{k_i}\right) =\frac{\left(a^{r}x^{r};q^{r}\right)}{\left(x^{r};q^{r}\right)} =\sum_{m=0}^\infty \frac{(a^r;q^r)_m}{(q^r;q^r)_m}x^{rm}, \notag
\eea
and \eqref{u1} follows by comparing the coefficients of $x^n$ in \eqref{pr1}. 

The proof of \eqref{b1} is similar. We start by noting that for $\left|ba^{-1}\right|<\left|x\right|<1$, we have

\begin{eqnarray*}
\bg
\prod_{i=0}^{r-1} \frac{(q,b/a, a\zeta_r^i z, q\zeta_r^{-i}/az;q)_\infty}{(b,q/a,\zeta_r^i z, b\zeta_r^{-i}/az;q)_\infty}=\frac{(q,b/a;q)_\infty^r}{(b,q/a;q)_\infty^r}\frac{(a^rz^r, q^ra^{-r}z^{-r};q^r)_\infty}{(z^r,b^ra^{-r}z^{-r};q^r)_\infty}
  \eg
\end{eqnarray*}
applying the Ramanujan
${}_{1}\psi_{1}$ sum \eqref{eq1psi1} to that identity  establishes \eqref{b1}. 
\end{proof}

Lemma \eqref{new1} enables us to prove the following result generalizing Theorem 4.2 of \cite{Ism:zha1}
\begin{thm}
For  $\alpha\ge0$ and integer $r\geq 2$ we have 
\begin{equation}\label{e1}
A_{q^r}^{(r\alpha)}\left(a^{r};t^{r}\right)=\sum_{k_1,\dots,k_{r-1}=0}^{\infty}\frac{\left(a;q\right)_{k_1}\cdots\left(a;q\right)_{k_{r-1}}\zeta_r^{\sum_{i=1}^{r-1}ik_i}
q^{\alpha\left(\sum_{i=1}^{r-1}k_i\right)^{2}}t^{\sum_{i=1}^{r-1}k_i}}{\left(q;q\right)_{k_1}\cdots\left(q;q\right)_{k_{r-1}}}A_{q}^{\left(\alpha\right)}\left(a;q^{2\alpha(\sum_{i=1}^{r-1}k_i)}t\right).
\end{equation}

\begin{equation}\label{e2}
A_{q^r}^{(r\alpha)}\left(a^{r};t^{r}\right)=\sum_{k_2,\dots,k_r=0}^{\infty}\frac{\left(a;q\right)_{k_2}\cdots\left(a;q\right)_{k_r}\zeta_r^{\sum_{i=2}^{r-1}ik_i}
q^{\alpha\left(\sum_{i=2}^{r}k_i\right)^{2}}t^{\sum_{i=2}^{r}k_i}}{\left(q;q\right)_{k_2}\cdots\left(q;q\right)_{k_r}}A_{q}^{\left(\alpha\right)}\left(a;\zeta_rq^{2\alpha(\sum_{i=2}^{r}k_i)}t\right).
\end{equation}

\end{thm}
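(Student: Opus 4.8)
The plan is to reduce both \eqref{e1} and \eqref{e2} to a single symmetric ``master'' expansion of the left-hand side as an unrestricted $r$-fold sum, and then recover each stated form by isolating one summation index. First I would unfold the left side directly from the definition \eqref{Aqdef}: substituting $q\mapsto q^r$, $\alpha\mapsto r\alpha$, $a\mapsto a^r$, $t\mapsto t^r$ gives
\[
A_{q^r}^{(r\alpha)}(a^r;t^r)=\sum_{m=0}^\infty \frac{(a^r;q^r)_m}{(q^r;q^r)_m}\,q^{r^2\alpha m^2}t^{rm}.
\]
The coefficient $(a^r;q^r)_m/(q^r;q^r)_m$ is exactly the value produced by the nonnegative $r$-fold sum in \eqref{u1} when $n=rm$, while that same sum vanishes whenever $r\nmid n$. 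This lets me fold the sum over $m$ into a sum over all tuples $(k_1,\dots,k_r)$ of nonnegative integers: grouping by $N=\sum_{i=1}^r k_i$, using \eqref{u1} to annihilate the terms with $r\nmid N$ and to supply the coefficient when $N=rm$ (so that $q^{r^2\alpha m^2}=q^{\alpha N^2}$ and $t^{rm}=t^N$), I arrive at the master identity
\[
A_{q^r}^{(r\alpha)}(a^r;t^r)=\sum_{k_1,\dots,k_r=0}^\infty \frac{(a;q)_{k_1}\cdots(a;q)_{k_r}}{(q;q)_{k_1}\cdots(q;q)_{k_r}}\,\zeta_r^{\sum_{i=1}^r i k_i}\,q^{\alpha(\sum_{i=1}^r k_i)^2}\,t^{\sum_{i=1}^r k_i}.
\]

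Next I would peel off one index. For \eqref{e1} I single out $k_r$ and set $S=\sum_{i=1}^{r-1}k_i$, so that $\sum_{i=1}^r k_i=S+k_r$. Two simplifications occur: the root-of-unity weight on $k_r$ disappears since $\zeta_r^{r k_r}=1$, leaving $\zeta_r^{\sum_{i=1}^{r-1} i k_i}$; and after completing the square $(S+k_r)^2=S^2+2Sk_r+k_r^2$ the inner sum over $k_r$ collapses, by the definition \eqref{Aqdef}, to
\[
\sum_{k_r=0}^\infty \frac{(a;q)_{k_r}}{(q;q)_{k_r}}\,q^{\alpha(S+k_r)^2}t^{S+k_r}=q^{\alpha S^2}t^{S}\,A_q^{(\alpha)}\!\left(a;q^{2\alpha S}t\right),
\]
where the factor $q^{2\alpha S k_r}$ has been absorbed into the argument. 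The remaining $(r-1)$-fold sum, carrying the prefactor $q^{\alpha S^2}t^S$ and the weight $\zeta_r^{\sum_{i=1}^{r-1} i k_i}$, is precisely the right-hand side of \eqref{e1}. For \eqref{e2} I instead single out $k_1$; now the weight $\zeta_r^{k_1}$ does \emph{not} vanish, so completing the square against $S'=\sum_{i=2}^r k_i$ yields $A_q^{(\alpha)}(a;\zeta_r q^{2\alpha S'}t)$, and in the surviving weight $\zeta_r^{\sum_{i=2}^r i k_i}$ the $i=r$ term is trivial, so it may be written as $\zeta_r^{\sum_{i=2}^{r-1} i k_i}$, matching \eqref{e2} verbatim.

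The algebra once the master identity is in hand is elementary, so the only point requiring genuine care is the rearrangement in the folding step, namely regrouping the sum over $m$ into the unrestricted $r$-fold sum and then separating off the $k_r$ (respectively $k_1$) summation. For $\alpha>0$ the Gaussian factor $q^{\alpha n^2}$ with $0<q<1$ forces absolute convergence of every series involved, so these interchanges are immediate; the boundary case $\alpha=0$ is handled by restricting to $|t|<1$, where the underlying $q$-binomial sum \eqref{qbinom} converges absolutely. I would flag this absolute-convergence justification as the main (and essentially the only) obstacle, the splitting of the exponent and the recognition of $A_q^{(\alpha)}$ being routine.
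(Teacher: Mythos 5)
Your proposal is correct and follows essentially the same route as the paper: both expand $A_{q^r}^{(r\alpha)}(a^r;t^r)$ by definition, invoke the lemma \eqref{u1} to rewrite the coefficients as the $r$-fold sum over nonnegative tuples (your ``master identity'' is exactly the paper's middle line, grouped by $n=\sum_i k_i$), and then collapse the inner sum over $k_r$ (resp.\ $k_1$ for \eqref{e2}) into a copy of $A_q^{(\alpha)}$ after splitting the exponent $\alpha(\,S+k\,)^2$. Your added remarks on absolute convergence (via the Gaussian factor for $\alpha>0$, and $|t|<1$ when $\alpha=0$) are a justification the paper leaves implicit, but the argument itself is the same.
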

\begin{proof}
The proof of \eqref{e1}follows from the following series identities
\begin{equation}
\begin{aligned}&A_{q^{r}}^{\left(r\alpha\right)}\left(a^{r};t^{r}\right)=\sum_{m=0}^\infty \frac{(a^r;q^r)_m}{(q^r;q^r)_m}
 q^{\alpha r^2 m^2}t^{rm}\\
&=\sum_{n=0}^\infty \sum_{(k_1,\dots,k_r)\in C^+_r(n)} \frac{(a;q)_{k_1}\cdots (a;q)_{k_r}}{(q;q)_{k_1}\cdots(q;q)_{k_r}} \zeta_r^{\sum_{i=1}^r ik_i}t^n q^{\alpha n^2}\\
&=\sum_{k_1,\dots,k_{r-1}=0}^{\infty}\frac{\left(a;q\right)_{k_1}\cdots\left(a;q\right)_{k_{r-1}}\zeta_r^{\sum_{i=1}^{r-1}ik_i}q^{\alpha\left(\sum_{i=1}^{r-1} k_i\right)^{2}}t^{\sum_{k=1}^{r-1}k_i}}{\left(q;q\right)_{k_1}\cdots\left(q;q\right)_{k_{r-1}}}A_{q}^{\left(\alpha\right)}\left(a;tq^{2\alpha\left(\sum_{i=1}^{r-1}k_i\right)}\right).
\end{aligned}
\label{eq:multiple sum 1rproof}
\end{equation}
The proof of \eqref{e2} is almost identical, except that in the last step the inner sum is performed over $k_1$ rather than $k_r$.
\end{proof}

Following \cite{Ism:zha1} we also consider the following generalization of the ${}_1\psi_1$ function. 
For $\alpha\ge0$, define $B_{q}^{\left(\alpha\right)}$ by 
\begin{equation}
B_{q}^{\left(\alpha\right)}\left(a,b;x\right)=\sum_{n=-\infty}^{\infty}\frac{\left(a;q\right)_{n}}{\left(b;q\right)_{n}}q^{\alpha n^{2}}x^{n}.\label{eq:multiple sum 10}
\end{equation}
Note that $B_q^{(\alpha)}$ is a bilateral series analogue of $A_q^{(\alpha)}$ (with one additional parameter in fact as a result of the generality afforded by the $_1\psi_1$ formula). Again using Lemma \eqref{new1} we obtain the following result.

\begin{thm} We have 

\begin{equation}
\begin{aligned}&B_{q^{r}}^{\left(r\alpha\right)}\left(a^{r},b^{r};x^{r}\right)  =\frac{\left(b,q/a;q\right)_{\infty}^{r}}{\left(q,b/a;q\right)_{\infty}^{r}}\frac{\left(q^{r},b^{r}a^{-r};q^{r}\right)_\infty}{\left(b^{r},q^{r}a^{-r};q^{r}\right)_{\infty}}\\
 & \times\sum_{k_1,\dots,k_{r-1}=-\infty}^{\infty}\frac{\left(a;q\right)_{k_1}\cdots\left(a;q\right)_{k_{r-1}}\zeta_r^{\sum_{i=1}^{r-1}ik_i}q^{\alpha\left(\sum_{i=1}^{r-1} k_i\right)^{2}}x^{\sum_{k=1}^{r-1}k_i}}{\left(b;q\right)_{k_1}\cdots\left(b;q\right)_{k_{r-1}}}B_{q}^{\left(\alpha\right)}\left(a,b;xq^{2\alpha\left(\sum_{i=1}^{r-1}k_i\right)}\right).
\end{aligned}
\label{eq:multiple sum 12r}
\end{equation}
\end{thm}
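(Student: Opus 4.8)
The plan is to mirror the derivation of \eqref{e1}, replacing the unilateral identity \eqref{u1} of Lemma \ref{new1} by its bilateral counterpart \eqref{b1}. First I would expand the left-hand side straight from the definition \eqref{eq:multiple sum 10},
\[
B_{q^r}^{(r\alpha)}(a^r,b^r;x^r)=\sum_{m=-\infty}^{\infty}\frac{(a^r;q^r)_m}{(b^r;q^r)_m}\,q^{r^2\alpha m^2}x^{rm},
\]
and then solve \eqref{b1} for the coefficient $(a^r;q^r)_m/(b^r;q^r)_m$: it equals the reciprocal of the prefactor appearing in \eqref{b1}, which is precisely the constant displayed in \eqref{eq:multiple sum 12r}, multiplied by the $C_r(rm)$-sum of $\prod_{i=1}^r (a;q)_{k_i}/(b;q)_{k_i}$ weighted by $\zeta_r^{\sum_i ik_i}$.

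The reorganization is the heart of the matter. Because the $\zeta_r$-weighted $C_r(n)$-sum vanishes whenever $r\nmid n$ by \eqref{b1}, the sum over $m$ of the $C_r(rm)$-sums may be rewritten as a single sum over all $n\in\z$ of the $C_r(n)$-sums, using $q^{r^2\alpha m^2}x^{rm}=q^{\alpha n^2}x^n$ for $n=rm$ and appending only vanishing terms when $r\nmid n$. Interchanging the now-unrestricted summations recasts the whole expression as the constant times a sum over $(k_1,\dots,k_r)\in\z^r$ of
\[
\frac{(a;q)_{k_1}\cdots (a;q)_{k_r}}{(b;q)_{k_1}\cdots (b;q)_{k_r}}\,\zeta_r^{\sum_{i=1}^r ik_i}\,q^{\alpha(\sum_i k_i)^2}x^{\sum_i k_i}.
\]
The decisive simplification is that $\zeta_r^{rk_r}=1$, so the root-of-unity weight collapses to $\zeta_r^{\sum_{i=1}^{r-1}ik_i}$ and no longer involves $k_r$. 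Writing $s=\sum_{i=1}^{r-1}k_i$ and summing over $k_r$ alone, the identity $(s+k_r)^2=s^2+2sk_r+k_r^2$ splits off the factor $q^{\alpha s^2}x^s$ and leaves
\[
\sum_{k_r=-\infty}^{\infty}\frac{(a;q)_{k_r}}{(b;q)_{k_r}}\,q^{\alpha(s+k_r)^2}x^{s+k_r}=q^{\alpha s^2}x^s\,B_q^{(\alpha)}(a,b;xq^{2\alpha s})
\]
by \eqref{eq:multiple sum 10}. Collecting the surviving free indices $k_1,\dots,k_{r-1}$ then produces \eqref{eq:multiple sum 12r} exactly.

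The main obstacle, and the only genuinely new feature relative to the proof of \eqref{e1}, is justifying these rearrangements: in place of the finite inner sums over $C_r^+(n)$ available in the unilateral setting, the index sets $C_r(n)$ and the inner sum over $k_r$ are now doubly infinite, so I must secure absolute convergence before interchanging orders or isolating $k_r$. For $\alpha>0$ this is automatic, since the Gaussian factor $q^{\alpha n^2}$ with $|q|<1$ dominates the geometrically controlled ratios $(a;q)_{k_i}/(b;q)_{k_i}$ and renders the multiple series over $\z^r$ absolutely convergent, so that Fubini legitimises every step. For the degenerate case $\alpha=0$ I would instead work inside the strip $|b/a|<|x|<1$ furnished by the ${}_1\psi_1$ sum \eqref{eq1psi1}, the very region in which \eqref{b1} was established, and confirm that the shifted argument $xq^{2\alpha s}=x$ stays within it. Once absolute convergence is in hand, the algebra above is routine and structurally identical to the unilateral computation.
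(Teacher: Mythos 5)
Your proposal is correct and follows essentially the same route as the paper's proof: expand $B_{q^r}^{(r\alpha)}$ from the definition \eqref{eq:multiple sum 10}, invoke the bilateral identity \eqref{b1} of Lemma \ref{new1} to rewrite each coefficient as a $C_r(rm)$-sum, extend to a sum over all integers $n$ using the vanishing for $r\nmid n$, and sum out $k_r$ (using $\zeta_r^{rk_r}=1$ and completing the square in the exponent) to recover $B_q^{(\alpha)}\left(a,b;xq^{2\alpha\sum_{i=1}^{r-1}k_i}\right)$. The only difference is that you additionally supply the absolute-convergence/Fubini justification for the rearrangements, which the paper's terse chain of equalities leaves implicit.
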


\begin{proof}
The proof follows from the following series identities 
\begin{equation}
\begin{aligned}&\frac{\left(q,b/a;q\right)_{\infty}^{r}}{\left(b,q/a;q\right)_{\infty}^{r}}\frac{\left(b^{r},q^{r}a^{-r};q^{r}\right)_{\infty}}{\left(q^{r},b^{r}a^{-r};q^{r}\right)_\infty}B_{q^{r}}^{\left(r\alpha\right)}\left(a^{r},b^{r};x^{r}\right)\\
&=\frac{\left(q,b/a;q\right)_{\infty}^{r}}{\left(b,q/a;q\right)_{\infty}^{r}}\frac{\left(b^{r},q^{r}a^{-r};q^{r}\right)_{\infty}}{\left(q^{r},b^{r}a^{-r};q^{r}\right)_\infty}\sum_{m=-\infty}^\infty \frac{(a^r;q^r)_m}{(b^r;q^r)_m}
 q^{\alpha r^2 m^2}x^{rm}\\
&=\sum_{n=-\infty}^\infty \sum_{(k_1,\dots,k_r)\in C_r(n)} \frac{(a;q)_{k_1}\cdots (a;q)_{k_r}}{(b;q)_{k_1}\cdots(b;q)_{k_r}} \zeta_r^{\sum_{i=1}^r ik_i}x^n q^{\alpha n^2}\\
&=\sum_{k_1,\dots,k_{r-1}=-\infty}^{\infty}\frac{\left(a;q\right)_{k_1}\cdots\left(a;q\right)_{k_{r-1}}\zeta_r^{\sum_{i=1}^{r-1}ik_i}q^{\alpha\left(\sum_{i=1}^{r-1} k_i\right)^{2}}x^{\sum_{k=1}^{r-1}k_i}}{\left(b;q\right)_{k_1}\cdots\left(b;q\right)_{k_{r-1}}}B_{q}^{\left(\alpha\right)}\left(a,b;xq^{2\alpha\left(\sum_{i=1}^{r-1}k_i\right)}\right).
\end{aligned}
\label{eq:multiple sum 12rproof}
\end{equation}
\end{proof}

\begin{cor}
The following family of bilateral Rogers-Ramanujan type identities hold

\begin{equation}
\begin{aligned}\sum_{n=-\infty}^{\infty}\frac{q^{r^2n^{2}}x^{rn}}{1-a^{r}q^{rn}} & =\frac{\left(q^{r};q^{r}\right)_{\infty}}{\left(q;q\right)_{\infty}^{r}}\frac{\left(a,q/a;q\right)_{\infty}^{r}}{\left(a^{r},q^{r}a^{-r};q^{r}\right)_{\infty}}\\
 & \times\sum_{k_1,\dots,k_r=-\infty}^{\infty}\frac{\zeta_r^{\sum ik_i}q^{\left(\sum k_i\right)^{2}}x^{\sum k_i}}{\left(1-aq^{k_1}\right)\cdots\left(1-aq^{k_r}\right)}.
\end{aligned}
\label{eq:multiple sum 15r}
\end{equation}

\end{cor}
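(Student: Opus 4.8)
The plan is to obtain \eqref{eq:multiple sum 15r} as the specialization $\alpha = 1$, $b = aq$ of the bilateral identity \eqref{eq:multiple sum 12r} for $B_q^{(\alpha)}$. The mechanism is the elementary reduction
\[
\frac{(a;q)_n}{(aq;q)_n} = \frac{1-a}{1-aq^n}, \qquad n \in \mathbb{Z},
\]
valid for all integers $n$ (write $(a;q)_n = (a;q)_\infty/(aq^n;q)_\infty$ and cancel). Under it the generating function collapses to the reciprocal shape of \eqref{eq:multiple sum 15r}: $B_q^{(\alpha)}(a,aq;y) = (1-a)\sum_k q^{\alpha k^2}y^k/(1-aq^k)$, and similarly for the outer $q^r$–series. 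First I would record the left-hand side, for which $b=aq$, $\alpha=1$ give
\[
B_{q^r}^{(r)}(a^r,a^rq^r;x^r) = (1-a^r)\sum_{n=-\infty}^{\infty}\frac{q^{r^2 n^2}x^{rn}}{1-a^r q^{rn}},
\]
so that the left side of \eqref{eq:multiple sum 15r} equals $(1-a^r)^{-1}B_{q^r}^{(r)}(a^r,a^rq^r;x^r)$.

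Next I would feed $b=aq$, $\alpha=1$ into the right side of \eqref{eq:multiple sum 12r}. Each of the $r-1$ factors $(a;q)_{k_i}/(aq;q)_{k_i}$ becomes $(1-a)/(1-aq^{k_i})$, while the inner $B_q^{(1)}\!\big(a,aq;xq^{2\sum_{i<r}k_i}\big)$ yields one further factor $1-a$ together with a sum over a fresh index $k_r$ of $q^{k_r^2+2k_r\sum_{i<r}k_i}x^{k_r}/(1-aq^{k_r})$. The structural heart of the argument is that
\[
\Big(\sum_{i=1}^{r-1}k_i\Big)^{2}+k_r^2+2k_r\sum_{i=1}^{r-1}k_i=\Big(\sum_{i=1}^{r}k_i\Big)^{2},
\qquad
\zeta_r^{\sum_{i=1}^{r-1}ik_i}=\zeta_r^{\sum_{i=1}^{r}ik_i},
\]
the latter because $\zeta_r^{rk_r}=1$; these two observations fuse the $(r-1)$–fold sum and the inner $k_r$–sum into the single symmetric $r$–fold sum of \eqref{eq:multiple sum 15r}, with a common factor $(1-a)^{r-1}\cdot(1-a)=(1-a)^r$ extracted.

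It then remains only to reconcile the scalar prefactors. Dividing by the $1-a^r$ coming from the left side and carrying the $(1-a)^r$ from the right, I would simplify the product constant of \eqref{eq:multiple sum 12r} at $b=aq$,
\[
\frac{(1-a)^r}{1-a^r}\cdot\frac{(aq,q/a;q)_\infty^{r}}{(q,q;q)_\infty^{r}}\cdot\frac{(q^r,q^r;q^r)_\infty}{(a^rq^r,q^ra^{-r};q^r)_\infty},
\]
using $(a;q)_\infty=(1-a)(aq;q)_\infty$ and $(a^r;q^r)_\infty=(1-a^r)(a^rq^r;q^r)_\infty$ to clear the factors $(aq;q)_\infty$ and $(a^rq^r;q^r)_\infty$ in favour of $(a;q)_\infty/(1-a)$ and $(a^r;q^r)_\infty/(1-a^r)$, which cancels every power of $1-a$. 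This purely scalar bookkeeping — in particular matching the powers of $(q;q)_\infty$ against those of $(q^r;q^r)_\infty$ once the $(1-a)$'s from the $B$–functions are stripped off — is the only delicate step and the place where I would verify the stated constant with care; the combinatorial core, namely the fusion of the sums, drops out of the theorem immediately. Throughout I would keep $x$ in the annulus where the underlying ${}_1\psi_1$ expansions converge.
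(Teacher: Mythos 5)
Your route is precisely the paper's: specialize \eqref{eq:multiple sum 12r} at $\alpha=1$, $b=aq$, reduce each $(a;q)_{k}/(aq;q)_{k}$ to $(1-a)/(1-aq^{k})$, and fuse the $(r-1)$-fold sum with the inner $B_q^{(1)}$-sum via $\bigl(\sum_{i<r}k_i\bigr)^2+2k_r\sum_{i<r}k_i+k_r^2=\bigl(\sum_{i\le r}k_i\bigr)^2$ and $\zeta_r^{rk_r}=1$. All of that is correct and is exactly what the paper does.

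The gap is the one step you set aside as ``scalar bookkeeping to verify with care'': when you actually carry it out, it does \emph{not} produce the constant in the statement. With $b=aq$ the prefactor of \eqref{eq:multiple sum 12r} contains the double products $(q,q;q)_\infty^r=(q;q)_\infty^{2r}$ and $(q^r,q^r;q^r)_\infty=(q^r;q^r)_\infty^2$, so after absorbing $(1-a)^r$ into $(aq;q)_\infty^r$ and $(1-a^r)$ into $(a^rq^r;q^r)_\infty$, your own expression for the constant simplifies to give
\begin{equation*}
\sum_{n=-\infty}^{\infty}\frac{q^{r^2n^{2}}x^{rn}}{1-a^{r}q^{rn}}
=\frac{\left(q^{r};q^{r}\right)_{\infty}^{2}}{\left(q;q\right)_{\infty}^{2r}}
\frac{\left(a,q/a;q\right)_{\infty}^{r}}{\left(a^{r},q^{r}a^{-r};q^{r}\right)_{\infty}}
\sum_{k_1,\dots,k_r=-\infty}^{\infty}\frac{\zeta_r^{\sum ik_i}q^{\left(\sum k_i\right)^{2}}x^{\sum k_i}}{\left(1-aq^{k_1}\right)\cdots\left(1-aq^{k_r}\right)},
\end{equation*}
that is, the factor $(q^r;q^r)_\infty/(q;q)_\infty^r$ appears \emph{squared}. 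This is not a defect of your strategy alone: the corollary as printed is off by exactly one factor of $(q^r;q^r)_\infty/(q;q)_\infty^r$ (most plausibly from reading the double products above as single ones), and the paper's own proof, which ends with ``the result follows on simplification,'' glosses over the same computation. A numerical check at $r=2$, $q=0.1$, $a=1/2$, $x=0.3$ confirms the squared constant: the left side is $\approx 1.33330$, the right side with the squared constant is $\approx 1.33330$, while the printed constant gives $\approx 1.06682$. So your derivation is sound and mirrors the paper's, but a complete proof must finish the constant computation, and doing so shows that what is actually proved is the corrected identity displayed above, not the corollary as stated.
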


\begin{proof}
Setting $\alpha=1$ and $b=aq$ and using \eqref{eq:multiple sum 12r} we see that
\begin{equation}
\begin{aligned}
&B_{q^r}^{(r)}(a^r,a^rq^r;x^r)=\frac{\left(q,q;q\right)_{\infty}^{r}}{\left(qa,q/a;q\right)_{\infty}^{r}}\frac{\left(q^{r}a^r,q^{r}a^{-r};q^{r}\right)_{\infty}}{\left(q^{r},q^r;q^{r}\right)_\infty} (1-a^r)\sum_{m=0}^\infty \frac{ q^{\alpha r^2m^2}x^{rm}}{(1-a^rq^m)}\\
&=(1-a)^r \sum_{k_1,\dots,k_r=-\infty}^\infty \frac{ \zeta_r^{\sum_{i=1}^r ik_i}x^{\sum k_i} q^{\alpha (\sum k_i)^2}}{(1-aq^{k_1})\cdots (1-aq^{k_r})},\\
\end{aligned}
\label{eq:multiple sum 13r}
\end{equation}
and the result follows on simplification.
 \end{proof}

\section{Heine Transforms and related identities}
We consider the basic hypergeometric function defined by
\bea
{}_2\phi_1  \left(\left. \begin{matrix} 
 a, b \\
c
\end{matrix}\, \right|q, z \right):=\sum_{k=0}^\infty \frac{(a;q)_k(b;q)_k}{(c;q)_k (q;q)_k} z^k.
\eea
We shall study the function

\bea\label{F1}
F(a,c;z):=\sum_{k=0}^\infty \frac{(a;q)_k (-1)^k q^{k(k-1)/2}z^k}{(q,c;q)_k}= \lim_{b \to \infty}
 {}_2\phi_1  \left(\left. \begin{matrix} 
 a, b \\
c
\end{matrix}\, \right|q,\frac{z}{b} \right). 
\eea
where we have utilized the limit 
\[
\lim_{b\to \infty} \frac{(b;q)_k}{b^k}=(-1)^k q^{k(k-1)/2}.
\]
We have the following alternative representation of $F(a;c;z)$
\begin{thm}
For $c, z\neq q^{-m}$ ($m$ nonnegative integer) we have
\bea \label{F2}
F(a,c,z)=\frac{(z;q)_\infty}{(c;q)_\infty}\sum_{k=0}^\infty\frac{(az/c;q)_k(-1)^k q^{k(k-1)/2}c^k}{(q;q)_k(z;q)_k}
\eea
\end{thm}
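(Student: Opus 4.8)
The plan is to read the series on the right of \eqref{F2} as a second limiting ${}_2\phi_1$ and to connect it to the defining limit \eqref{F1} via one of Heine's transformations, which is the natural tool given where this result sits. Recall from \eqref{F1} that
\[
F(a,c;z)=\lim_{b\to\infty}{}_2\phi_1\left(\left.\begin{matrix} a, b\\ c\end{matrix}\,\right|q,\frac{z}{b}\right).
\]
My first step is to apply Heine's second transformation \cite[(III.2)]{Gas:Rah},
\[
{}_2\phi_1\left(\left.\begin{matrix} a, b\\ c\end{matrix}\,\right|q,w\right)=\frac{(c/b,bw;q)_\infty}{(c,w;q)_\infty}\,{}_2\phi_1\left(\left.\begin{matrix} abw/c, b\\ bw\end{matrix}\,\right|q,\frac{c}{b}\right),
\]
with the specialization $w=z/b$. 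The reason for choosing this transformation rather than the other two is that the factor $b$ then cancels in exactly the right places: one gets $bw=z$ and $abw/c=az/c$, so that after the substitution $b$ survives only as a numerator parameter and inside the small argument $c/b$, both of which behave well as $b\to\infty$. (For $b$ large both $|z/b|<1$ and $|c/b|<1$, so the transformation applies.)

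Carrying out the substitution $w=z/b$ yields
\[
{}_2\phi_1\left(\left.\begin{matrix} a, b\\ c\end{matrix}\,\right|q,\frac{z}{b}\right)=\frac{(c/b,z;q)_\infty}{(c,z/b;q)_\infty}\,{}_2\phi_1\left(\left.\begin{matrix} az/c, b\\ z\end{matrix}\,\right|q,\frac{c}{b}\right),
\]
and the second step is to let $b\to\infty$. The prefactor is immediate, since $(c/b;q)_\infty\to1$ and $(z/b;q)_\infty\to1$, leaving the factor $(z;q)_\infty/(c;q)_\infty$ that appears in \eqref{F2}. For the surviving ${}_2\phi_1$ I would pass the limit through the summation and invoke the same elementary limit
\[
\lim_{b\to\infty}(b;q)_k\left(\frac{c}{b}\right)^k=(-1)^k q^{k(k-1)/2}c^k
\]
already used in \eqref{F1}. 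The $k$-th term of ${}_2\phi_1(az/c,b;z;q,c/b)$ then tends to $\dfrac{(az/c;q)_k(-1)^k q^{k(k-1)/2}c^k}{(q;q)_k(z;q)_k}$, which is precisely the summand in \eqref{F2}; reassembling the prefactor and the series gives the claimed identity.

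The step that requires genuine care, and which I regard as \textbf{the main obstacle}, is justifying the interchange of the limit $b\to\infty$ with the infinite sum. I would make this rigorous by dominated convergence. Writing $(b;q)_k(c/b)^k=c^k\prod_{j=0}^{k-1}(b^{-1}-q^j)$ and bounding $|b^{-1}-q^j|\le |b_0|^{-1}+q^j$ for all $b$ with $|b|\ge|b_0|$ produces the estimate $|c|^k\prod_{j=0}^{k-1}(|b_0|^{-1}+q^j)$ on the modulus of the $k$-th summand, which is independent of $b$. For $|b_0|$ large this sequence is summable, since the tail of the product decays geometrically like $|b_0|^{-k}$ while $|(az/c;q)_k|$ stays bounded and $(q;q)_k,(z;q)_k$ stay bounded away from $0$; this furnishes the dominating function. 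The hypotheses $c,z\neq q^{-m}$ are exactly what guarantee that the denominators $(c;q)_k$ and $(z;q)_k$ never vanish, so every term is well defined throughout. Once the interchange is secured, the termwise comparison above completes the proof.
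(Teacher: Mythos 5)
Your proof is correct and follows essentially the same route as the paper: apply Heine's transformation \cite[(III.2)]{Gas:Rah} with argument $z/b$ so that $bw=z$ and $abw/c=az/c$, then let $b\to\infty$ using $\lim_{b\to\infty}(b;q)_k(c/b)^k=(-1)^kq^{k(k-1)/2}c^k$. The only difference is that you supply a dominated-convergence justification for interchanging the limit with the sum, a step the paper passes over silently.
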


\begin{proof}
Recall  the Heine transformation \cite[(III.2)]{Gas:Rah}   
\bea
{}_2\phi_1  \left(\left. \begin{matrix} 
 A , B \\
C
\end{matrix}\, \right|q,z\right) 
&=& \frac{(C/B, Bz;q)_\infty}{(C, z;q)_\infty} 
\; {}_2\phi_1  \left(\left. \begin{matrix} 
 ABz/C , B \\
Bz
\end{matrix}\, \right|q,\frac{C}{B}\right).  
\label{eqHeine2}
\eea

\bea
\notag
\sum_{k=0}^\infty \frac{(a;q)_k (-1)^k q^{k(k-1)/2}z^k}{(q;c;q)_k}&=& \lim_{b \to \infty}
 {}_2\phi_1  \left(\left. \begin{matrix} 
 a, b \\
c
\end{matrix}\, \right|q,\frac{z}{b} \right) \\
\notag
&=&\lim_{ b \to \infty}\frac{(c/b;q)_\infty (z;q)_\infty}{(c;q)_\infty(z/b;q)_\infty}
 {}_2\phi_1  \left(\left. \begin{matrix} 
 az/c, b \\
z
\end{matrix}\, \right|q,\frac{c}{b} \right) \\ &=& \frac{(z;q)_\infty}{(c;q)_\infty}\sum_{k=0}^\infty\frac{(az/c;q)_k(-1)^k q^{k(k-1)/2}c^k}{(q;q)_k(z;q)_k}
\notag
\eea
For the transformation formula to be valid we need neither $c$ nor $z$ to be of the form $q^{-m}$. This proves \eqref{F2}.
\end{proof}
\noindent {\bf Remark.} Note that for $c=z$, both sides of \eqref{F2} are identical. Otherwise the result above provides a way of interchanging the roles of $c$ and $z$ in which both sums are very similar except that the right hand side acquires the factor $\frac{(z;q)_\infty}{(c;q)_\infty}$ and the expression $(a;q)_k$ is replaced by  $(az/c;q)_k$. 

In the following corollary, we examine a case in which the above Theorem leads to transforming infinite sums into finite ones.

\begin{cor}
Let $\alpha, \gamma$ be real numbers such that none of $\alpha+\gamma$ and $\gamma-n$ are negative integers for any nonnegative integer $n$ (in particular, $\gamma$ itself can not be a negative integer). Then the following identity holds

\bea\label{F3}
\sum_{k=0}^\infty \frac{(q^\alpha;q)_k (-1)^k q^{k(k-1)/2}q^{k(\gamma-n)}}{(q,q^{\alpha+\gamma};q)_k}&\\
=  \frac{(q^{\gamma-n};q)_\infty}{(q^{\alpha+\gamma};q)_\infty}\sum_{k=0}^n\frac{(q^{-n};q)_k(-1)^k q^{k(k-1)/2}q^{k(\gamma+\alpha)}}{(q;q)_k(q^{\gamma-n};q)_k},
&
\notag
\eea
\end{cor}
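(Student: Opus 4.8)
The plan is to obtain \eqref{F3} as a direct specialization of the alternative representation \eqref{F2}. First I would set $a=q^{\alpha}$, $c=q^{\alpha+\gamma}$ and $z=q^{\gamma-n}$ in the Theorem. With these choices $z^{k}=q^{k(\gamma-n)}$ and $(c;q)_{k}=(q^{\alpha+\gamma};q)_{k}$, so the left-hand side of \eqref{F2}, namely $F(a,c,z)=\sum_{k\ge 0}(a;q)_{k}(-1)^{k}q^{k(k-1)/2}z^{k}/(q,c;q)_{k}$, becomes verbatim the left-hand side of \eqref{F3}. There is nothing to compute here beyond recording the substitution.

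Next I would verify the hypotheses of the Theorem, i.e.\ $c,z\neq q^{-m}$ for every nonnegative integer $m$. The assumption that $\gamma-n$ is a negative integer for no nonnegative $n$ forces $\gamma\notin\mathbb{Z}$, so $z=q^{\gamma-n}$ is never of the form $q^{-m}$ (and in particular $z\neq 1$, keeping the denominators $(q^{\gamma-n};q)_{k}$ nonzero); similarly the condition on $\alpha+\gamma$ keeps $c=q^{\alpha+\gamma}$ away from the values $q^{-m}$, so that $(q^{\alpha+\gamma};q)_{k}\neq 0$. Hence \eqref{F2} is applicable at these parameter values.

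The crucial computation is the argument $az/c$ that appears inside the transformed series. Here $az/c=q^{\alpha+(\gamma-n)-(\alpha+\gamma)}=q^{-n}$, while $c^{k}=q^{k(\alpha+\gamma)}$ and the prefactor $(z;q)_{\infty}/(c;q)_{\infty}$ equals $(q^{\gamma-n};q)_{\infty}/(q^{\alpha+\gamma};q)_{\infty}$. Substituting into the right-hand side of \eqref{F2} therefore produces
\[
\frac{(q^{\gamma-n};q)_{\infty}}{(q^{\alpha+\gamma};q)_{\infty}}\sum_{k=0}^{\infty}\frac{(q^{-n};q)_{k}(-1)^{k}q^{k(k-1)/2}q^{k(\alpha+\gamma)}}{(q;q)_{k}(q^{\gamma-n};q)_{k}}.
\]

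Finally, the passage from the infinite sum to the finite one in \eqref{F3} rests on the single observation that $(q^{-n};q)_{k}=\prod_{j=0}^{k-1}(1-q^{-n+j})$ vanishes as soon as $k\ge n+1$, since the factor indexed by $j=n$ equals $1-q^{0}=0$. Thus every term with $k>n$ drops out and the series collapses to $\sum_{k=0}^{n}$, which is exactly \eqref{F3}. I do not expect a genuine obstacle in this argument; the only points deserving care are recognizing that the substitution $az/c=q^{-n}$ is precisely what triggers the truncation, and checking that the stated conditions on $\alpha$ and $\gamma$ are exactly those needed to keep $c$ and $z$ away from the forbidden values $q^{-m}$ so that both sides are well defined.
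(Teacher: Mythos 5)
Your proof is correct and follows essentially the same route as the paper: both specialize the theorem giving \eqref{F2} with $a=q^{\alpha}$, $c=q^{\alpha+\gamma}$, $z=q^{\gamma-n}$, so that $az/c=q^{-n}$ and the factor $(q^{-n};q)_k$ truncates the series at $k=n$. Your write-up is actually more careful than the paper's one-line proof, since you verify the hypotheses $c,z\neq q^{-m}$ explicitly (and you use the correct substitution $c=q^{\alpha+\gamma}$, where the paper's prose misstates it as $c=q^{\gamma}$ even though its displayed equation agrees with yours).
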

\begin{proof}
This follows directly from \eqref{F2} by setting $a=q^\alpha$ and $c=q^\gamma$ to obtain
\bea
F(q^\alpha, q^{\alpha+\gamma};q^{\gamma-n})= \frac{(q^{\gamma-n};q)_\infty}{(q^{\alpha+\gamma};q)_\infty}\sum_{k=0}^n\frac{(q^{-n};q)_k(-1)^k q^{k(k-1)/2}q^{k(\gamma+\alpha)}}{(q;q)_k(q^{\gamma-n};q)_k}.
\eea

\end{proof}
A well-known consequence of the $q$-binomial theorem  \eqref{qbinom} for $|x|<1$ (see p. 490 of \cite{And:Ask:Roy} for instance)
\bea\label{qbinom1}
\frac{1}{(x;q)_n}=\sum_{k=0}^\infty \frac{(q;q)_{n+k-1}}{(q;q)_k(q;q)_{n-1}}x^k.
\eea
Combining \eqref{qbinom1} with our results above yields the following evaluation.

\begin{cor}
For $|x|<1$ and integer $n\geq 0$ we have
\bea\label{F4}
\sum_{k=0}^\infty \frac{(q;q)_{n+k-1}}{(q;q)_k(q;q)_{n-1}}x^k=\sum_{k=0}^n\frac{(q^{-n};q)_k(-1)^k q^{k(k-1)/2}}{(q;q)_k(xq^{-n};q)_k}x^k.
\eea
\end{cor}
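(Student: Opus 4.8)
The plan is to recognize both sides of \eqref{F4} as familiar objects and then apply the Heine-type evaluation \eqref{F2}. First I would observe that, by \eqref{qbinom1}, the left-hand side is nothing but $1/(x;q)_n$, so the content of the corollary is really a closed form for that reciprocal as a terminating sum. On the right-hand side, I would recognize the summand $\frac{(q^{-n};q)_k(-1)^kq^{k(k-1)/2}x^k}{(q;q)_k(xq^{-n};q)_k}$ as exactly the $k$-th term of the series $F(a,c;z)$ of \eqref{F1} under the specialization $a=q^{-n}$, $c=xq^{-n}$, $z=x$; the factor $(q^{-n};q)_k$ vanishes for $k>n$, which is precisely why the sum terminates at $k=n$ and is automatically finite. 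Thus the right-hand side of \eqref{F4} equals $F(q^{-n},xq^{-n};x)$.

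The core step is then to evaluate this terminating $F$ by means of Theorem \eqref{F2}. Applying \eqref{F2} with $a=q^{-n}$, $c=xq^{-n}$, $z=x$ produces the prefactor $(x;q)_\infty/(xq^{-n};q)_\infty$ multiplying a transformed series whose summand carries the factor $(az/c;q)_k$. The decisive observation is that here $az/c=q^{-n}\cdot x/(xq^{-n})=1$, so $(az/c;q)_k=(1;q)_k$ vanishes for every $k\ge 1$ and the entire transformed series collapses to its single $k=0$ term, which equals $1$. This reduces the right-hand side to the bare product $(x;q)_\infty/(xq^{-n};q)_\infty$.

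It then remains to simplify this ratio of infinite products into a finite $q$-shifted factorial. I would split $(xq^{-n};q)_\infty=\prod_{j\ge 0}(1-xq^{j-n})$ into its first $n$ factors and the tail: the first $n$ factors are exactly $(xq^{-n};q)_n$, while the tail $\prod_{j\ge n}(1-xq^{j-n})$ reindexes (via $i=j-n$) to $(x;q)_\infty$. Hence $(x;q)_\infty/(xq^{-n};q)_\infty = 1/(xq^{-n};q)_n$, giving a clean closed form for the terminating sum.

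The step I expect to be the main obstacle — and the one I would check most carefully — is matching this final factorial against the left-hand side coming from \eqref{qbinom1}. The collapse $az/c=1$ and the product split are routine, but one must be vigilant about exactly which argument ($x$ versus $xq^{-n}$) appears in the reciprocal factorial, since the Heine transform naturally shifts the base point by $q^{-n}$. Confirming that this shift is consistent with the instance of \eqref{qbinom1} used on the left-hand side is where the bookkeeping genuinely matters, so I would verify the small case $n=1$ explicitly before committing to the general argument.
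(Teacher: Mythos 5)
Your reduction is, up to direction, exactly the paper's own argument: applying \eqref{F2} to the triple $(a,c,z)=(q^{-n},xq^{-n},x)$, where the transformed series collapses because $az/c=1$, is the mirror image of the paper's application of \eqref{F2} with $(a,c,z)=(1,x,xq^{-n})$ (i.e.\ $\alpha=0$ in \eqref{F3}), where it is the \emph{original} series that collapses because $a=1$. Either way one arrives, correctly, at
\[
\sum_{k=0}^n\frac{(q^{-n};q)_k(-1)^k q^{k(k-1)/2}}{(q;q)_k(xq^{-n};q)_k}\,x^k
=\frac{(x;q)_\infty}{(xq^{-n};q)_\infty}=\frac{1}{(xq^{-n};q)_n},
\]
which is precisely the content of the paper's \eqref{F5} and \eqref{F6}. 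Your identification of the right-hand side of \eqref{F4} as $F(q^{-n},xq^{-n};x)$, the collapse of the transformed sum, and the telescoping of the infinite products are all correct.

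However, the final matching step that you singled out as the main obstacle is a genuine obstruction, not bookkeeping: it cannot be carried out, because \eqref{F4} as printed is false. By \eqref{qbinom1} the left-hand side of \eqref{F4} is $1/(x;q)_n$, while you have shown the right-hand side equals $1/(xq^{-n};q)_n$, and these disagree for every $n\ge 1$. The $n=1$ test you proposed exposes this at once: the left side is $\sum_{k\ge 0}x^k=1/(1-x)$, whereas the right side is $1+x/(q-x)=1/(1-xq^{-1})$. The flaw lies in the paper itself: the last line of its proof applies \eqref{qbinom1} with argument $x$ where the argument must be $xq^{-n}$. What your computation (and the paper's) actually proves is
\[
\sum_{k=0}^\infty \frac{(q;q)_{n+k-1}}{(q;q)_k(q;q)_{n-1}}\,(xq^{-n})^k
=\sum_{k=0}^n\frac{(q^{-n};q)_k(-1)^k q^{k(k-1)/2}}{(q;q)_k(xq^{-n};q)_k}\,x^k,
\qquad |x|<|q|^{n},
\]
or equivalently, after replacing $x$ by $xq^{n}$,
\[
\sum_{k=0}^\infty \frac{(q;q)_{n+k-1}}{(q;q)_k(q;q)_{n-1}}\,x^k
=\sum_{k=0}^n\frac{(q^{-n};q)_k(-1)^k q^{k(k-1)/2}\,q^{nk}}{(q;q)_k(x;q)_k}\,x^k,
\qquad |x|<1 .
\]
With either corrected statement your argument closes verbatim; as the corollary is written, no proof is possible, and you should trust the discrepancy your own sanity check would have revealed rather than force the two factorials to agree.
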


\begin{proof}
By setting $\alpha=0$ in \eqref{F3} the left hand side simplifies to $1$ since $(1;q)_0=1$ and $(1;q)_k=0$ for $k\geq 1$. We thus obtain
\bea\label{F5}
1&=&  \frac{(q^{\gamma-n};q)_\infty}{(q^{\gamma};q)_\infty}\sum_{k=0}^n\frac{(q^{-n};q)_k(-1)^k q^{k(k-1)/2}q^{k\gamma}}{(q;q)_k(q^{\gamma-n};q)_k}.
\eea
Note that 
\bea \label{F6}
\frac{(x;q)_\infty}{(xq^{-n};q)_\infty}=\frac{1}{(xq^{-n};q)_n}.
\eea
Thus, setting $x=q^\gamma$ in \eqref{F5} and applying \eqref{qbinom1} we obtain \eqref{F4}.
\end{proof}

In fact, we can generalize the argument above to obtain the following family of formulas which exhibits a remarkable symmetry between two finite sums of different lengths

\begin{thm}
Let $m,n$ be nonnegative integers, and assume $x\neq q^l$ for any integer $l$, then the following holds 
\bea\label{F7}
\sum_{k=0}^m \frac{(q^{-m};q)_k (-1)^kq^{k(k-2n-1)/2} x^k}{(q,x/q^m;q)_k}=\frac{(xq^{-n};q)_\infty}{(xq^{-m};q)_\infty}\sum_{k=0}^n  \frac{(q^{-n};q)_k (-1)^k q^{k(k-2m-1)/2} x^k}{(q,x/q^n;q)_k}.
\notag
\eea
\end{thm}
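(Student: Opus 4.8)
The plan is to recognize the left-hand side of \eqref{F7} as a single value of the function $F$ and to read off the identity as a direct specialization of the alternative representation \eqref{F2}. The starting observation is the exponent split $q^{k(k-2n-1)/2}x^k = q^{k(k-1)/2}(xq^{-n})^k$, valid because $k(k-2n-1)/2 = k(k-1)/2 - nk$. Combining this with $(q;q)_k(x/q^m;q)_k = (q, xq^{-m};q)_k$ rewrites the left-hand sum as
\[
\sum_{k=0}^m \frac{(q^{-m};q)_k(-1)^k q^{k(k-1)/2}(xq^{-n})^k}{(q, xq^{-m};q)_k},
\]
which is precisely $F(q^{-m}, xq^{-m}; xq^{-n})$ in the notation of \eqref{F1}. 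The upper limit $m$ is automatic here: the factor $(q^{-m};q)_k$ vanishes for $k>m$, so the series defining $F$ truncates at $k=m$.

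Next I would apply \eqref{F2} with $a=q^{-m}$, $c=xq^{-m}$, and $z=xq^{-n}$. The single computation that drives everything is the value of the transformed numerator parameter,
\[
\frac{az}{c}=\frac{q^{-m}\cdot xq^{-n}}{xq^{-m}}=q^{-n}.
\]
Thus \eqref{F2} yields the prefactor $(z;q)_\infty/(c;q)_\infty = (xq^{-n};q)_\infty/(xq^{-m};q)_\infty$ times a series whose summand carries the factor $(q^{-n};q)_k$, which truncates the new series at $k=n$. Reversing the exponent split in the mirrored form $q^{k(k-1)/2}(xq^{-m})^k = q^{k(k-2m-1)/2}x^k$ and using $(q;q)_k(xq^{-n};q)_k = (q, x/q^n;q)_k$ turns the right-hand series into exactly the claimed sum. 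The interchange $m \leftrightarrow n$ between the two sides is then nothing more than the exchange of the two terminating parameters $q^{-m}$ and $q^{-n}$ that is built into Heine's transformation, the source of \eqref{F2}: on the left, length $m$ sits in $(q^{-m};q)_k$ and $n$ is encoded in $z=xq^{-n}$, while after the transformation length $n$ sits in $(q^{-n};q)_k$ and $m$ is encoded in the summand $(xq^{-m})^k$.

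The only point requiring care rather than calculation is verifying that all hypotheses of \eqref{F2} and all denominators are legitimate. Theorem \eqref{F2} demands $c, z \notin \{q^{-\ell} : \ell \ge 0\}$; here $c=xq^{-m}$ and $z=xq^{-n}$ can equal a power $q^{-\ell}$ only if $x$ is an integer power of $q$, which is exactly what the hypothesis $x\neq q^\ell$ (for every integer $\ell$) forbids. The same hypothesis guarantees that none of the finite products $(x/q^m;q)_k$, $(x/q^n;q)_k$ vanish and that $(xq^{-n};q)_\infty \neq 0$, so the prefactor is well defined. I expect no genuine obstacle beyond this bookkeeping: once the two exponent identities $k(k-2n-1)/2 = k(k-1)/2 - nk$ and $k(k-2m-1)/2 = k(k-1)/2 - mk$ are in hand, the statement is a one-line substitution into \eqref{F2}, with the finiteness of both sums coming for free from the terminating factors $(q^{-m};q)_k$ and $(q^{-n};q)_k$.
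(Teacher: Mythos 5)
Your proof is correct and follows essentially the same route as the paper: the paper obtains the theorem by setting $\alpha=-m$ (with $x=q^\gamma$) in its Corollary \eqref{F3}, which is itself the specialization $a=q^\alpha$, $c=q^{\alpha+\gamma}$, $z=q^{\gamma-n}$ of \eqref{F2}, so both arguments reduce to the same substitution into the Heine-limit transformation, with $az/c=q^{-n}$ and $a=q^{-m}$ forcing termination on both sides. If anything, your direct substitution $a=q^{-m}$, $c=xq^{-m}$, $z=xq^{-n}$ into \eqref{F2} is marginally cleaner, since it treats arbitrary $x\neq q^l$ in one step rather than passing through the $x=q^\gamma$ parameterization built into \eqref{F3}.
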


\begin{proof}
This follows by setting $\alpha=-m$ in \eqref{F3} and noting that $(q^{-m};q)_k=0$ for $k\geq m+1$. It is well known that the expression $(y;q)_\infty$ with $|q|<1$ converges, and since both sums are now finite there are no convergence concerns, except that we need to guarantee that $(x/q^m;q)_k$ (as well as $(x/q^n;q)_k$ and $(x/q^m;q)_\infty$ are nonzero), which leads to the restrictions placed on $x$.  
\end{proof}

\noindent{Ahmad El-Guindy}\\
{Current address: Science Program, Texas A\&M University in Qatar, Doha, Qatar}\\
{Permanent address: Department of Mathematics, Faculty of Science, Cairo University, Giza, Egypt 12613}\\
email: {a.elguindy@gmail.com}
  \bigskip
  
\noindent M. E. H. I,
  University of Central Florida, 
Orlando, Florida 32828, \\
and King Saud University,  Riyadh, Saudi Arabia\\
  email: ismail@math.ucf.edu

\end{document}